\newtheorem{theorem}{Theorem}
\newtheorem{lemma}{Lemma}
\newtheorem{proposition}{Proposition}
\newtheorem{remark}{Remark}
\newtheorem{property}{Property}
\newtheorem{corollary}{Corollary}
\title{Complete holomorphic vector fields on $\mathbb{C}^2$ whose underlying
foliation is polynomial}
\author{Alvaro Bustinduy}
\address{Departamento de Ingenier{\'\i}a Industrial \newline
         \indent Escuela Polit{\'e}cnica Superior \newline
         \indent Universidad Antonio de Nebrija \newline
         \indent C/ Pirineos 55, 28040 Madrid. Spain}
         \email{abustind@nebrija.es}
\thanks{2000 {\it Mathematics Subject Classification.} Primary 32M25;
Secondary 32L30, 32S65}
\thanks {{\it Key words and phrases.} Complete vector field,
complex orbit, holomorphic foliation}
\thanks{Supported by MEC
projects MTM2007-61124 and  MTM2006-04785.}
\begin{document}
\begin{abstract} We extend the classification of
complete polynomial vector fields in two complex variables given by Marco Brunella to cover the case of
holomorphic (non-polynomial) vector fields whose underlying
foliation is however still polynomial.
\end{abstract}

\maketitle \markright{COMPLETE VECTOR FIELDS}

\section{Introduction and Statement of Results}
\noindent Given a holomorphic vector field $X$ on $\mathbb{C}^2$
one knows that the associated ordinary differential equation
\begin{equation*}
\dot{z}=X(z),\ z(0)=z_0\in\mathbb{C}^{2},
\end{equation*}
has a unique local solution $t\mapsto\varphi_{z}(t)$, that can be
extended by analytic continuation along paths in $\mathbb{C}$,
with origin at $t=0$, to a maximal connected Riemann surface
$\pi_z:\Omega_z(X)\to\mathbb{C}$ which is spread as a Riemann
domain over $\mathbb{C}$. The projection $\pi_z $ permits to lift
this extension  as a well-defined holomorphic function $\varphi_z:
\Omega_z (X)\to \mathbb{C}^2$ (\em see \em
\cite[p.\,126]{Forstneric}). This map  is the \em solution \em of
$X$ through $z$, and its image $C_z= \varphi_z(\Omega_z(X))$ is the
\em trajectory \em of $X$ through $z$. $X$ is \em complete \em
when $\Omega_z(X)=\mathbb{C}$ for every $z\in\mathbb{C}^2$. In
this case the flow $\varphi(t,z)=\varphi_z(t)$ of $X$ defines an
action of $(\mathbb{C},+)$ on $\mathbb{C}^{2}$ by holomorphic
automorphisms, and each trajectory of $X$, as Riemann surface
uniformized by $\mathbb{C}$ in a Stein manifold $\mathbb{C}^{2}$,
is analytically isomorphic to $\mathbb{C}$ or $\mathbb{C}^{\ast}$
(will be said of type $\mathbb{C}$ or $\mathbb{C}^{\ast}$). As an
important property, we remark that the trajectories of type
$\mathbb{C}^{\ast}$ of a complete holomorphic vector field on
$\mathbb{C}^2$  are proper (\em see \em
\cite{Suzuki-springercorto}). Let us recall that a trajectory
$C_z$ is said to be {\em proper} if its topological closure
defines an analytic curve in $\mathbb{C}^2$ of pure dimension one.
\subsection{Suzuki's Classification}
\noindent In his pioneering work \cite{Suzuki-anales} M. Suzuki classified on
$\mathbb{C}^2$: (a) complete holomorphic
vector fields whose time $t$ maps $\varphi(t,\cdot)$ of the flow
$\varphi$ are polynomials (\em algebraic flows\em), modulo
polynomial automorphism, and (b) complete holomorphic vector
fields whose trajectories are all proper (\em proper flows\em),
modulo holomorphic automorphism. The vector fields $X$ of the two
classifications together are of the forms (\em see \em \cite[Th{\'e}or{\`e}mes 2
et 4]{Suzuki-anales}):

\noindent $1)$
$$[a(x)y+b(x)]\frac{\partial}{\partial y},$$
with $a(x)$ and $b(x)$ entire functions in one variable.


\noindent $2)$
$$\lambda x \frac{\partial}{\partial x} + \mu y
\frac{\partial}{\partial y},$$ with $\lambda$, $\mu\in\mathbb{C}$.


\noindent $3)$
$$\lambda x \frac{\partial}{\partial x} + (\lambda m y + x^m)
\frac{\partial}{\partial y},$$ with $\lambda\in\mathbb{C}^{\ast}$,
$m\in\mathbb{N}$.

\noindent $4)$
$$\lambda (x^m y^n) \cdot \left \{
n x \frac{\partial}{\partial x} - m  y \frac{\partial}{\partial y}
\right \},$$ with $m,n\in\mathbb{N}^{\ast}$, $(m,n)=1$, and
$\lambda$ an entire function in  $z$ ($z=x^m y^n$).


\noindent $5)$
$$\frac{\lambda(x^m(x^{\ell}y+p(x))^n)}{x^{\ell}}\cdot \left \{
 nx^{\ell+1}\frac{\partial}{\partial x}-
 [(m+n\ell)x^{\ell} y+mp(x)+nx \dot{p} (x)]\frac{\partial}{\partial y}
 \right \},
$$
where $m,n,\ell\in\mathbb{N}^{\ast}$, $(m,n)=1$,
$p\in\mathbb{C}[x]$ of degree $< \ell$ with $p(0)\neq{0}$, and
$\lambda$ an entire function in $z$ ($z=x^m(x^{\ell}y+p(x))^n$)
with a zero of order $\geq \ell/m$ at $z=0$.

\vspace{0.15cm}

We will refer to the above list as \em Suzuki's list. \em Let us
comment some aspects of it. On case $5)$, the condition of
$\lambda$ at $z=0$ guarantees that the vector field thus defined
is holomorphic. Without this restriction, the vector field is
complete on $x\neq{0}$ but it has a pole along $x=0$. Algebraic
flows arise only for $1)$, $2)$ and $3)$, while proper flows are
defined by vector fields of $1)$, of $2)$ if
$\lambda/\mu\in\mathbb{Q}$, of $3)$ if $m=0$, and also of $4)$ and $5)$. In this
situation of proper flows, there exists always a rational first integral, given by
$x$ in the cases 1) and 3), $y^p/x^{q}$ in the case $2)$ ($p$,
$q\in\mathbb{Z}$ with $p/q=\lambda/\mu\in\mathbb{Q}$), $x^m y^n$
in the case $4)$, and $x^m(x^{\ell}y+p(x))^n$ in the case $5)$.
Therefore, modulo holomorphic automorphism, having a proper flow,
that is equivalent to the existence of a meromorphic first
integral \cite{Suzuki-springer-largo}, is equal to having a
rational first integral of one of the four types above. Flows
occurring in $2)$, with $\lambda/\mu\notin\mathbb{Q}$, and $3)$
with $m\neq{0}$, are never proper.

\subsection{Brunella's Classification}
The classification of complete polynomial vector fields on
$\mathbb{C}^2$, modulo polynomial automorphism, has been recently
obtained by M. Brunella in the outstanding work
\cite{Brunella-topology}. This classification is given by the
following vector fields (expressed in terms of Suzuki's list):

\noindent I)
$$
[cx+d]\frac{\partial}{\partial x} + Z,
$$
where $c$, $d\in \mathbb{C}$, and $Z$ is as $1)$ with $a$,
$b\in\mathbb{C}[x]$.

\noindent II)
$$
ay\frac{\partial}{\partial y} +  Z,
$$
where $a\in\mathbb{C}$, and $Z$ is as $4)$ with
$\lambda\in\mathbb{C}[z].$

\noindent III)
$$
a\left( \frac{x^{\ell}y+p(x)}{x^{\ell}} \right)
\frac{\partial}{\partial y} +  Z,
$$
where $a\in\mathbb{C}$, and $Z$ is as $5)$ with
$\lambda\in\mathbb{C}[z]$, which does not verify any
condition in the order at $z=0$, but nevertheless satisfies
the  following polynomial relation that guarantees that the sum is
holomorphic:
 $$(\ast) \quad \lambda(x^m(x^{\ell}y+p(x))^n)[mp(x)+nx \dot{p} (x)] - ap(x) \in
x^{\ell}\cdot\mathbb{C}[x,y].$$

These two classifications above are given in different contexts.
While Suzuki works with holomorphic objects (holomorphic vector
fields modulo holomorphic automorphism), Brunella is interested in
polynomial ones (polynomial vector fields modulo polynomial
automorphism). However, both are related. On one hand, each vector
field in Suzuki's list is multiple of a polynomial one by a
holomorphic function. On the other hand, each polynomial field in
Brunella's classification can be decomposed in the sum of a
complete vector field with a polynomial first integral in the form
of Suzuki's list, and a vector field which preserves this integral,
verifying moreover the necessary conditions to avoid the
rationality of the sum: $a,b\in\mathbb{C}[x]$ in I),
$\lambda\in\mathbb{C}[z]$ in II), and $\lambda\in\mathbb{C}[z]$
and satisfying ($\ast$) in III). Let us also remark that if one
does not consider these restrictions, the proofs of Propositions 1
and 2 in \cite{Brunella-topology} also work to characterize the
rational complete vector fields that preserve a polynomial of type
$\mathbb{C}$ or $\mathbb{C}^{\ast}$.
\subsection{Statement of the theorem}
The result of this work is the extension of Brunella's
classification to cover the case of non-polynomial holomorphic
vector fields whose associated foliation in $\mathbb{C}^2$ is
still polynomial, that is, defined by a polynomial vector field.
Let us observe that these vector fields admit an unique
representation of the form $f\cdot Y$, with $Y$ a polynomial
vector field with isolated singularities and $f$ a transcendental
function, up to multiplication by constants.

\begin{theorem}\label{principal1}
Let $X$ be a complete vector field on $\mathbb{C}^2$ of the form
$f \cdot Y$, where $Y$ is a polynomial vector field with isolated
singularities  and $f$ is a transcendental function. Then, all the
trajectories of $X$ are proper and, up to a holomorphic
automorphism, $X$ is in Suzuki's list.
\end{theorem}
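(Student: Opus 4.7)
The plan is to reduce the theorem to Suzuki's classification of proper flows. I would first prove that the polynomial foliation $\mathcal{F}$ defined by $Y$ admits a rational first integral, and then identify the precise shape of $f$ by matching $X = f\cdot Y$ against the corresponding normal form in Suzuki's list.

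To produce the rational first integral, I would begin with the observation recalled in the Introduction that the $\mathbb{C}^{\ast}$-trajectories of a complete holomorphic vector field on $\mathbb{C}^2$ are proper. Since the trajectories of $X$ agree, up to the singular locus of $X$, with the leaves of $\mathcal{F}$, every $\mathbb{C}^{\ast}$-trajectory of $X$ yields an algebraic leaf of $\mathcal{F}$; by the Darboux--Jouanolou theorem, if infinitely many leaves of $\mathcal{F}$ are algebraic then $\mathcal{F}$ has a rational first integral. Thus the delicate case is the one in which all but finitely many trajectories of $X$ are of type $\mathbb{C}$. To treat it I would adapt the strategy of Propositions 1 and 2 of \cite{Brunella-topology} to the holomorphic, non-polynomial flow $\varphi(t,\cdot)$ of $X$: one extends $\mathcal{F}$ to $\mathbb{P}^2$ and analyses its Camacho--Sad and Baum--Bott indices along the invariant curves at infinity. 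Brunella exploits that the flow acts by polynomial automorphisms; here the substitute rigidity would come from two additional inputs, namely that the zero locus $\{f=0\}$ is an entire analytic subset of $\mathbb{C}^2$ contained in the singular set of $X$, and that on each leaf of $\mathcal{F}$ the reparametrization $\tau(t)=\int dt/f$ from $Y$-time to $X$-time must be globally surjective onto $\mathbb{C}$. Combined with the polynomiality of $Y$, this should force the existence of enough algebraic leaves (after extension to $\mathbb{P}^2$) to trigger Darboux--Jouanolou. A parallel but much shorter argument rules out Suzuki's cases 2) with $\lambda/\mu\notin\mathbb{Q}$ and 3) with $m\neq 0$: for those foliations the only entire first integrals are constants, and the reparametrization analysis above forces $f$ to be a first integral of $Y$, contradicting transcendence.

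Once $\mathcal{F}$ is known to possess a rational first integral $H$, Suzuki's classification of proper flows supplies a holomorphic change of coordinates putting $\mathcal{F}$ into one of the normal forms 1), 2) with $\lambda/\mu\in\mathbb{Q}$, 3) with $m=0$, 4) or 5). Writing $Y=g\cdot Y_0$, where $Y_0$ is the standard logarithmic vector field tangent to $H$ in that model, one has $X=(fg)\cdot Y_0$; the holomorphy of $X$ at the singular set of $Y_0$, together with the fact that $fg$ must be a first integral of $Y_0$, then forces $fg$ to equal an entire function of $H$ of the type prescribed by Suzuki, including the order-of-vanishing condition in case 5). The main obstacle of the whole argument is the first step, the production of the rational first integral in the sub-case where generic trajectories are of type $\mathbb{C}$: this is precisely where the interplay between completeness of $f\cdot Y$, polynomiality of $\mathcal{F}$, and transcendence of $f$ has to be exploited in full, without the convenience of the polynomial flow hypothesis used in \cite{Brunella-topology}.
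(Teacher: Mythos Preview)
Your central strategy---to show that the polynomial foliation $\mathcal{F}$ generated by $Y$ always admits a rational first integral---is not just hard, it is false. The paper's own Example~1 furnishes a counterexample: take $Y = x\,\partial/\partial x + \partial/\partial y$ and $f = xe^{-y}$. Then $X = f\cdot Y$ is complete (since $f$ is a first integral of the complete field $Y$), $f$ is transcendental, and $Y$ has isolated (in fact no) singularities. The leaves of $\mathcal{F}$ are the curves $\{xe^{-y}=c\}$, which for $c\neq 0$ are transcendental; the only algebraic leaf is $\{x=0\}$, so by Darboux--Jouanolou $\mathcal{F}$ has \emph{no} rational first integral. This example lands exactly in the case $\textnormal{kod}(\tilde{\mathcal{F}})=0$ treated in \S4 of the paper (it is case $b)$ with $m=0$), and it shows that the main obstacle you yourself flag is genuinely insurmountable along the route you propose.

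Two subsidiary claims also fail. First, ``every $\mathbb{C}^{\ast}$-trajectory of $X$ yields an algebraic leaf of $\mathcal{F}$'' is incorrect: properness of a trajectory means only that its closure is an analytic curve, not an algebraic one. The paper explicitly uses this distinction in \S4, invoking Brunella's work on \emph{transcendental} properly embedded ends to deduce $P$-completeness. Second, even granting a rational first integral, your endgame asserts that $fg$ ``must be a first integral of $Y_0$''; Property~\ref{segunda} in the paper only gives that such a multiplier is a \emph{second} integral (affine along trajectories), which is strictly weaker and is precisely what the paper's analysis exploits.

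The paper's actual proof does not attempt to force a rational first integral. Instead it bifurcates: if $\mathcal{F}$ has one, Proposition~\ref{firsts} finishes directly; if not, McQuillan's classification gives $\textnormal{kod}(\tilde{\mathcal{F}})\in\{0,1\}$, and the paper analyses each case via the Riccati structure (Lemma~\ref{lema1}), the invariance of $\{x=0\}$ (Lemma~\ref{lema4}), and the covering construction of \S4, ultimately establishing properness of trajectories (Proposition~2) without ever producing a rational integral for $\mathcal{F}$.
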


A more precise classification, up to polynomial automorphisms,
will be stated in \S 5 and proved in sections \S2 -- \S 4.

\subsection{About the Theorem and its proof}

Let us comment some aspects of the proof. Although $Y$ is not
necessarily complete, Brunella's results \cite{Brunella-topology}
can be applied to the foliation $\mathcal{F}$ generated by $Y$ on
$\mathbb{C}^2$, extended to $\mathbb{CP}^2$. Let us first remind
some definitions. According to Seidenberg's Theorem, the minimal
resolution $\tilde{\mathcal{F}}$ of $\mathcal{F}$ is a foliation
defined on a rational surface $M$ after pulling back $\mathcal{F}$
by a birational morphism $\pi: M \to \mathbb{CP}^2$, that is a
finite composition of blowing ups. Associated to this resolution
one has: 1) The Zariski's open set $U=\pi^{-1}(\mathbb{C}^2)$ of
$M$,  over which $Y$ can be lifted to a holomorphic vector field
$\tilde{Y}$, 2) \em the exceptional divisor \em $E$ of $U$, and 3)
the \em divisor at infinity \em
$$D=M\setminus U = {\pi}^{-1}(\mathbb{CP}^2\setminus\mathbb{C}^2)=
{\pi}^{-1}(L_{\infty}),$$ that is a tree of a smooth rational
curves. The vector field $\tilde{Y}$ can be extended to $M$,
although it may have poles along one or more components of $D$.
Let us still denote this extension by $\tilde{Y}$. In
$\mathbb{C}^2$ one blows-up only singularities of the foliation,
which are in the zero set of $X$, hence $\tilde{X}$ is holomorphic
and complete on the full $U$, and its essential singularities are
contained in $D$

We start studying the cases in which $\mathcal{F}$ has rational
first integral (\S 2). The next step is the analysis of
$\mathcal{F}$ when Kodaira dimension
$\textnormal{kod}(\tilde{\mathcal{F}})$ of $\tilde{\mathcal{F}}$
is $1$ or $0$, which corresponds to the absence of a rational first
integral. The unique case in which $X$ can be determined using
directly \cite{Brunella-topology} is
$\textnormal{kod}(\tilde{\mathcal{F}})=0$ and $Y$ of type
$\mathbb{C}$ (1.- of \S 4). The remaining cases, that are
$\textnormal{kod}(\tilde{\mathcal{F}})=1$ (\S 3) and
$\textnormal{kod}(\tilde{\mathcal{F}})=0$ and $Y$ of type
$\mathbb{C}^{\ast}$ (2.- of \S 4), require to go a bit further on
\cite{Brunella-topology}. First we see that $\tilde{\mathcal{F}}$
is a Riccati foliation adapted to a fibration $g: M \to
\mathbb{P}^1$, whose projection to $\mathbb{C}^2$ by $\pi$ defines
a rational function $R$ of type $\mathbb{C}$ or
$\mathbb{C}^{\ast}$ (Lemma~\ref{lema1}). Let us denote the
$\tilde{\mathcal{F}}$-invariant components of $g$ by $\Gamma$. At
this point, one could think as a strategy to continue, that $X$ can
be determined if one proves similarly as in
\cite[Lemma\,3]{Brunella-topology} the completeness of $\tilde{X}$
on $M\setminus \Gamma$ (and thus the completeness of its
projection by $g$). Then it would be enough to see that the poles
of $\tilde{X}$ together with its essential singularities must be
contained in $\Gamma$. But this does not generally occur since $f$
can have poles and essential singularities which are transversal
to the fibers of $g$. This was pointed out to me by the referee
with Example~1.

Finally, we can avoid the previous obstacle. The principle idea is
to decompose $X$ as a complete vector field multiplied by a second
integral, implying that all its trajectories are proper
(Proposition~2). The case $R$ of type $\mathbb{C}$ is almost
direct (\S 3.1 and 1.- of \S 4). However, for the case $R$ of type
$\mathbb{C}^{\ast}$ (\S 3.2 and 2.- of \S 4) we need to prove the
presence of an invariant line by $Y$ (Lemma~\ref{lema4}). It
allows to determine $X$.

\noindent \em {\bf Example 1.} \em  Let us consider the complete
polynomial vector field
$$ Y= x \frac{\partial}{\partial x} +
\frac{\partial}{\partial y},$$ and its holomorphic first integral
$f=xe^{-y}$. The foliation $\tilde{\mathcal{F}}$ generated by $Y$
in $M=\mathbb{CP}^1\times\mathbb{CP}^1$ is Riccati with respect to
$g(x,y)=y$; it has a \em semidegenerate \em fibre over $y=\infty$,
with a saddle-node singularity at $x=\infty$, $y=\infty$, and such
that the flow of $Y$ preserves $g$.

On the other hand, $\Gamma=\{y=\infty\}$ and the complete vector
field $X=f\cdot Y$ does not project via $g$ to a complete vector
field on $g(M\setminus\Gamma)$(=$\mathbb{C}$). After a change of
coordinates $x\mapsto 1/x$, $y\mapsto 1/y$, $X$ becomes
$$
X= \frac {e^{- \frac{1}{y}}}{xy} \left \{x
\frac{\partial}{\partial x} - y^2 \frac{\partial}{\partial y}
\right \},$$ and $f$ has a first-order pole along the weak
separatrix $C=\{x=0\}$, that is transversal to $g$, and an
essential singularity along the strong separatrix $\{y=0\}\subset
\Gamma$. In this example the essential singularity is contained in
a fiber of $g$. However, by multiplying $Y$ by a transcendental
first integral $e^{xe^{-y}}$, we obtain another complete
holomorphic vector field with the essential singularity
transversal to $g$.

\vspace{0.15cm}

\noindent \em Acknowledgement. \em \newline
\noindent I want to thank the referee for
his suggestions that have improved this paper a lot. In
particular, he pointed out an error in a previous version of this
work and gave me a more simple and geometric way to prove Lemma~2.

\section{Some properties of $X$ and Rational first integrals}

\subsection{Properties}

\begin{property}\label{tipos} Types of $X$ and $Y$.
\em A complete holomorphic vector field on $\mathbb{C}^2$ is
either of type $\mathbb{C}$ or $\mathbb{C}^{\ast}$, depending on
the type of its generic (in sense of logarithmic capacity)
trajectory. Moreover in the latter case there is a meromorphic
first integral (\em see \em \cite[Th{\'e}or{\`e}me
II]{Suzuki-springer-largo}). For $Y$, which cannot be \em complete,
\em the same occurs, that if $f=0$ is empty or invariant by $Y$, the
types of $X$ and $Y$ coincide. When $f=0$ is not invariant by $Y$,
also due to the Stein-ness of $\mathbb{C}^2$, $X$ must be of type
$\mathbb{C}^{\ast}$ and $Y$ of type $\mathbb{C}$. \em
\end{property}

\begin{property}\label{segunda}
If $Y$ is complete $f$ is affine along its trajectories. \em This is
a consequence of \cite[Proposition 3.2]{Varolin}. Let us take a
point $z$ with $Y(z)\neq 0$ and the solution $\varphi_z:
\mathbb{C}\to C_z$ of $Y$ through it. The restriction of $X$ to
$C_z$, $X_{\mid C_z}$, is complete since $C_z$ outside the zeros
of $X$ is a trajectory of this vector field. As $\varphi_z$ is a
holomorphic covering map (\em see \em \cite[Proposition
1.1]{Forstneric}) $\varphi_z^{\ast}(X_{\mid C_z})$ is complete and
so affine. Therefore $$\varphi_z^{\ast}(X_{\mid
C_z})=(f\circ\varphi_z(t))\cdot\varphi_z^{\ast}(Y_{\mid C_z})=
(f\circ\varphi_z(t))\frac{\partial}{\partial t},
$$
and $(f\circ\varphi_z)(t)=at+b$, for $a$, $b\in\mathbb{C}$. In
particular $(Y f)(\varphi_z(t))=(f\circ\varphi_z)^{'}(t)$ is
constant and hence $Y^2 f =0$. Such a function $f$ is called a \em
second integral \em of $Y$. In complex geometry is important to
study these integrals. The main reason being they are the
natural tool to produce new complete vector fields. While
holomorphic first integrals of a complete $Y$ were described in
Suzuki's work, the second ones had not been extensively studied
until the recent work of D. Varolin \cite{Varolin}.  \em
\end{property}

\begin{property}\label{tercera}
If $Y$ is complete, it has a holomorphic first integral, and then
its trajectories are proper. Therefore, $X$ is in Suzuki's list.
\em There are two cases:

{\bf 1.-} \em $Y f$ is not constant \em. Then $Y f$ is an
holomorphic first integral.

{\bf 2.-} \em $Y f$ is constant. \em We observe that if $C_z$ is
of type $\mathbb{C}^{\ast}$, $f\circ\varphi_z$ is not only affine
but even constant, because $\varphi_z ^{\ast} (f \cdot Y)$ is
invariant by a group of translations, and hence $Y f=0$ along it.
$Yf=0$ on $C_z$ implies $Yf=0$ everywhere, since $Yf$ is a constant.
Hence $f$ itself is a first integral. Then we can assume that all
the trajectories of $Y$ must be of type $\mathbb{C}$. This last
property together with the fact of being $f\circ\varphi_z$ linear
implies that each trajectory $C_z$ of $Y$ (a copy of $\mathbb{C}$)
meets all the fibres of $f$ in an unique point. Then $f$ must
define a (global) fibration over $\mathbb{C}$ which is trivialized
by the trajectories of $Y$, and hence these trajectories are
proper. Moreover, according to Suzuki (\em see \em
\cite[p.\,527]{Suzuki-anales}), there is a holomorphic first
integral, which can be reduced to a coordinate after a holomorphic
automorphism.

\em
\end{property}

\subsection{Rational first integrals}

\begin{proposition}\label{firsts}
If $\mathcal{F}$ has a rational first integral, up to a polynomial
automorphism, $X$ is as \em 1), 4), \em or \em 5)  \em of Suzuki's
list. In fact, the first integral is polynomial.
\end{proposition}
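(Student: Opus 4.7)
The plan is to apply Suzuki's theorem to the complete field $X$ itself, and then use the hypothesis that $Y=X/f$ is polynomial with isolated singularities to cut the resulting list down. Since $\mathcal{F}$ admits a rational first integral, its leaves are algebraic and every trajectory of $X$ is therefore proper, so Suzuki's classification places $X$, up to a holomorphic automorphism, in one of the entries $1)$--$5)$. The entries whose flows are not proper (case $2)$ with $\lambda/\mu\notin\mathbb{Q}$ and case $3)$ with $m\neq 0$) and the one whose leaves are transcendental (case $3)$ with $m=0$, whose first integral $xe^{-\lambda y}$ has non-algebraic level sets) are incompatible with the algebraicity of the leaves of $\mathcal{F}$ and are immediately discarded.

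In each of the survivors let $Y_0$ denote the canonical polynomial generator of $\mathcal{F}$ read off from the Suzuki model. Any polynomial vector field defining $\mathcal{F}$ is of the form $hY_0$ for a polynomial $h$, and because the zero-set of a non-constant polynomial on $\mathbb{C}^2$ is a curve, the isolated-singularity hypothesis on $Y$ forces $h$ to be a nonzero constant, i.e.\ $Y=cY_0$. (In case $5)$, the condition $p(0)\neq 0$ already makes $Y_0$ nowhere vanishing, so this step is automatic.) The only remaining possibility with a non-polynomial first integral is case $2)$ with positive ratio $\lambda/\mu=p/q$ and first integral $y^p/x^q$; to eliminate it, observe that $Y_0=px\partial_x+qy\partial_y$ is linear and hence complete, with generic trajectory of type $\mathbb{C}^\ast$. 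Property~\ref{tercera} then forces $f$ to be constant along every such trajectory, so $f=\phi(y^p/x^q)$ for a meromorphic $\phi$. Evaluating $f$ on the invariant axes $\{y=0\}$ and $\{x=0\}$ shows that $\phi$ must have finite limits at $0$ and at $\infty$, so $\phi$ is holomorphic on $\mathbb{CP}^1$ and therefore constant, contradicting the transcendence of $f$.

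In the three remaining cases the first integral is polynomial and, by the classification of polynomials on $\mathbb{C}^2$ of type $\mathbb{C}$ or $\mathbb{C}^\ast$, a \emph{polynomial} automorphism normalises it to $x$, $x^m y^n$, or $x^m(x^\ell y+p(x))^n$. Case $1)$: the restriction of $X=fc\partial_y$ to each line $\{x=\mathrm{const}\}\cong\mathbb{C}$ must be affine (complete vector fields on $\mathbb{C}$ are affine), giving $fc=a(x)+b(x)y$ with $a,b$ entire, which is Suzuki's form $1)$. Case $4)$: $Y=cY_0$ is complete, so Property~\ref{tercera} applies directly and delivers $X=\lambda(x^m y^n)(nx\partial_x-my\partial_y)$, Suzuki's form $4)$. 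Case $5)$: $Y_0$ is not complete, but on a generic leaf $\Gamma_c\cong\mathbb{C}^\ast$ the restriction $Y_0|_{\Gamma_c}$ equals $\kappa x^\ell s\partial_s$ for a constant $\kappa$, so completeness of $X|_{\Gamma_c}$ forces $fcx^\ell$ to be constant on $\Gamma_c$, hence to equal $\lambda(P)$ for an entire $\lambda$ with $P=x^m(x^\ell y+p(x))^n$; the requirement that $f=\lambda(P)/(cx^\ell)$ be holomorphic across $\{x=0\}$ then yields Suzuki's order condition $\mathrm{ord}_0(\lambda)\geq\ell/m$, recovering form $5)$.

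The main obstacle I expect is the bookkeeping for case $5)$: translating the leafwise completeness of $X$ into the precise global algebraic shape $\lambda(P)/x^\ell$ and extracting Suzuki's order condition at $0$ from the local behaviour of $Y_0$ near the invariant line $\{x=0\}$. The elimination of case $2)$ and the treatment of cases $1)$ and $4)$ are comparatively routine once Suzuki's list and Property~\ref{tercera} are in hand.
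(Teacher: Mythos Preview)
Your argument has a genuine gap at the step ``$Y=cY_0$''. Suzuki's classification only furnishes a \emph{holomorphic} automorphism $\Phi$ with $\Phi_*X$ in one of the model forms. Under such a $\Phi$ the vector field $\Phi_*Y$ is merely holomorphic, not polynomial; it generates the same foliation as the polynomial model generator $Y_0$, so $\Phi_*Y=h\,Y_0$ for some holomorphic $h$, and the isolated--zero hypothesis only forces $h$ to be nowhere vanishing, not constant (e.g.\ $h=e^g$ is allowed). Everything you build on $Y=cY_0$ then collapses: the appeal to Property~\ref{tercera} in your elimination of case~2) needs $Y$ complete, and your computations in cases 4) and 5) need $Y$ equal to the explicit polynomial model. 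Likewise, the sentence ``in the three remaining cases the first integral is polynomial'' refers to the first integral in the \emph{new} coordinates; pulling it back by $\Phi^{-1}$ gives only a holomorphic first integral of the original $\mathcal F$, so you cannot yet invoke the polynomial classification of maps of type $\mathbb C$ or $\mathbb C^\ast$.

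The paper closes this gap by a direct citation: Suzuki's own proof of Th\'eor\`eme~4 in \cite{Suzuki-anales} shows that when the complete field has a \emph{rational} first integral, the normalising automorphism may already be taken \emph{polynomial}. Granting this, the transform of $Y$ stays polynomial with isolated singularities, and case~2) is excluded in one line: after the polynomial change $X$ equals the polynomial field $\lambda x\,\partial_x+\mu y\,\partial_y$, so in any decomposition $X=f'\cdot Y'$ with $Y'$ polynomial and with isolated zeros one gets $Y'=c(\lambda x\,\partial_x+\mu y\,\partial_y)$ and $f'$ constant, contradicting that a polynomial automorphism sends the transcendental $f$ to a transcendental $f'$. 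No further verification of the shapes in 1), 4), 5) is then needed---once the automorphism is polynomial, $X$ \emph{is} literally one of those forms. Your detailed case analysis in 1), 4), 5) is essentially re-proving what Suzuki already packages into that citation.
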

\begin{proof}
Let us recall from the introduction that any complete holomorphic
vector field with a meromorphic first integral (i.e. with a proper
flow) can be transformed by a holomorphic automorphism in $1)$,
$2)$ if $\lambda/\mu\in\mathbb{Q}$, $4)$ or $5)$ of Suzuki's list,
with respectively $x$, $y^p/x^{q}$ ($p$, $q\in\mathbb{Z}$ with
$p/q=\lambda/\mu\in\mathbb{Q}$), $x^m y^n$, and
$x^m(x^{\ell}y+p(x))^n$ as first integral. More still, as $X$
has rational first integral, the reduction to one of these
possible forms can be obtained by a polynomial automorphism (\em
see \em \cite[proof of Th{\'e}or{\`e}me 4]{Suzuki-anales}). Case $2)$ is exchanged,
since $f$ is transcendental and it can not be transformed by a polynomial automorphism
in a constant map.
\end{proof}
From now on we will assume the \em absence of rational first
integrals for $\mathcal{F}$, and then for $X$. \em Thus
$\tilde{\mathcal{F}}$ admits lots of tangent entire curves; one
for each trajectory of $X$, and most of them are Zariski dense in
$M$ by Darboux's Theorem. It implies that the Kodaira dimension
$\textnormal{kod}(\tilde{\mathcal{F}})$ of $\tilde{\mathcal{F}}$
is either $0$ or $1$ \cite{Mc}. We will study these two
possibilities as in \cite[p.\,437]{Brunella-topology}.

\section{
$\textnormal{{\bf kod}}(\tilde{\mathcal{F}})=1$}

\noindent
According to McQuillan (\em see \em \cite[Section
IV]{Mc}) $\tilde{\mathcal{F}}$ must be \em a Riccati or a
Turbulent foliation, \em that is, there exists a fibration $g: M \to
B$ (maybe with singular fibres) whose generic fibre is
respectively a rational or an elliptic curve transverse to
$\tilde{\mathcal{F}}$. We will say that $g$ is adapted to
$\tilde{\mathcal{F}}$.

\begin{lemma}\label{lema1}
$\tilde{\mathcal{F}}$ is a Riccati foliation. In fact, $g_{\mid
U}$ is projected by $\pi$ as a rational function $R$ on
$\mathbb{C}^2$ of type $\mathbb{C}$ or $\mathbb{C}^{\ast}$.
\end{lemma}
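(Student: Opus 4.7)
My plan is to first invoke McQuillan's structure theorem (via the hypothesis $\textnormal{kod}(\tilde{\mathcal{F}})=1$ and the absence of a rational first integral) to obtain a fibration $g: M\to B$ adapted to $\tilde{\mathcal{F}}$, with generic fibre rational (Riccati) or elliptic (turbulent). Since $M$ is rational, $B\cong\mathbb{P}^1$. The lemma then splits into (i) excluding the turbulent alternative, and (ii) showing that the descent $R=g\circ(\pi|_U)^{-1}$ is a rational function on $\mathbb{C}^2$ of type $\mathbb{C}$ or $\mathbb{C}^{\ast}$.

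For (i), the completeness of $X$ together with Darboux gives, for generic $z$, a Zariski-dense trajectory $C_z$ of type $\mathbb{C}$ or $\mathbb{C}^{\ast}$ tangent to $\tilde{\mathcal{F}}$. The generic leaf $L$ of $\tilde{\mathcal{F}}$ coincides with such a trajectory up to singularities of $X$, so $L$ is $\mathbb{C}$ or $\mathbb{C}^{\ast}$. Since $L$ is an \'etale cover of $\mathbb{P}^1\setminus S$, with $S$ the finite set of base images of $\tilde{\mathcal{F}}$-invariant fibres, the constraint $L\in\{\mathbb{C},\mathbb{C}^{\ast}\}$ forces $|S|\le 2$. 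In the turbulent case the transverse monodromy lies in $\textnormal{Aut}(E)$ for the elliptic generic fibre $E$, and is cyclic because $\pi_1(\mathbb{P}^1\setminus S)$ has rank $\le 1$. Trivial or finite monodromy produces a rational first integral of $\tilde{\mathcal{F}}$, contradicting our standing assumption; infinite cyclic monodromy is an irrational translation on $E$ whose orbits are dense, so the generic leaf is dense in an open subset of $M$, contradicting the properness (Suzuki) of the lifted trajectories of $X$ in $U$.

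For (ii), because $\pi|_U: U\to\mathbb{C}^2$ is a proper birational morphism, $R=g\circ(\pi|_U)^{-1}$ is a well-defined rational function. A generic fibre of $R$ equals $F\setminus(F\cap D)$ for a generic $F\cong\mathbb{P}^1$ fibre of $g$, so it is $\mathbb{P}^1$ minus $k=F\cdot D$ points; the type of $R$ is $\mathbb{C}$, $\mathbb{C}^{\ast}$, or hyperbolic according as $k=1,\ 2,\ \ge 3$. To rule out $k\ge 3$ I would observe that the horizontal components of $D$ must be $\tilde{\mathcal{F}}$-invariant (a non-invariant horizontal component would be crossed by Zariski-dense trajectories in a way that contradicts their containment in $U$ together with completeness), hence they correspond to $\tilde{\mathcal{F}}$-invariant sections or multisections of the Riccati fibration. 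These invariant sections are in bijection with fixed points of the monodromy representation of $\tilde{\mathcal{F}}$ into $\textnormal{PGL}_2$; since $|S|\le 2$ the monodromy is cyclic, and a cyclic subgroup of $\textnormal{PGL}_2$ has at most two fixed points on $\mathbb{P}^1$, yielding $k\le 2$.

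I expect the principal obstacle to be the careful handling of the turbulent exclusion (specifically the properness argument for infinite-order monodromy) and the bookkeeping of multiplicities for multisections when bounding $k$; both rely essentially on the completeness of $X$ and the resulting properness and Zariski-density of the trajectories in $\mathbb{C}^2$ via Suzuki's theorems recalled in the introduction.
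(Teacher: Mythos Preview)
Your monodromy approach is natural but diverges from the paper's argument, and two steps do not go through as written.

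\textbf{Turbulent exclusion.} The contradiction you draw from infinite cyclic monodromy fails on both counts. First, a translation of infinite order on $E=\mathbb{C}/\Lambda$ need not have dense orbits: the closure of $\mathbb{Z}a$ is a closed real subgroup, possibly a one--real--dimensional subtorus. Second, and more seriously, Suzuki's properness theorem only concerns trajectories of type $\mathbb{C}^{\ast}$; when $|S|=2$ and the monodromy has infinite order the generic leaf is the universal cover of $\mathbb{C}^{\ast}$, hence of type $\mathbb{C}$, and you have no properness to invoke (and even for $\mathbb{C}^{\ast}$ trajectories, properness in $U$ says nothing about closures in $M$). The paper proceeds differently: for $Y$ of type $\mathbb{C}$ it shows the singularities of $\mathcal{F}$ are non-dicritical (else Darboux gives a rational first integral), so $D\cup E$ is $\tilde{\mathcal{F}}$-invariant, and then cites \cite[Lemma~1]{Brunella-topology}, which produces a rational first integral for any turbulent foliation with $D\cup E$ invariant. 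For $Y$ of type $\mathbb{C}^{\ast}$ it bypasses the dichotomy altogether via \cite{Brunella-topology2}.

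\textbf{Bounding $k$.} Your claim that horizontal components of $D$ are $\tilde{\mathcal{F}}$-invariant because otherwise trajectories would ``leave $U$'' is not an argument: trajectories are by definition contained in $U$, and there is no contradiction in the ambient $\tilde{\mathcal{F}}$-leaf meeting $D$ transversally. The paper obtains this invariance from non-dicriticity (as above), and then bounds $k$ not by counting fixed points of the monodromy but by combining the local models of Riccati invariant fibres with \cite[Lemmas~2 and~5]{Brunella-topology}. Your $\mathrm{PGL}_2$ fixed-point count also collapses when the monodromy is trivial or of finite order (then every point, respectively many periodic orbits, give invariant horizontal curves); you would need to eliminate those cases first via the rational-first-integral observation before the bound applies.
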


\begin{proof}
It follows from Property~\ref{tipos} that $Y$ is of type
$\mathbb{C}$ or $\mathbb{C}^{\ast}$:

\em $1.-$ $Y$ of type $\mathbb{C}$. \em Then \em $\mathcal{F}$ has
only  non-dicritical singularities. \em Otherwise we had
infinitely many separatrices through a singularity, and infinitely
many of them would define algebraic trajectories by Chow's
Theorem, which would give us a rational first integral for $Y$
according to Darboux's Theorem. Therefore both \em  $E$ and $D$
are $\tilde{\mathcal{F}}$-invariant. \em But this implies that
$\tilde{\mathcal{F}}$ is a \em Riccati foliation \em because in
this situation we can always construct a rational integral for a
Turbulent $\tilde{\mathcal{F}}$ (\em see \em \cite[Lemma1]{Brunella-topology}).

On the other hand, after contracting
$\tilde{\mathcal{F}}$-invariant curves contained in fibres of $g$
(rational curves), we can assume that $g$ has no singular fibres
and that around each $\tilde{\mathcal{F}}$-invariant fibre of $g$,
$\tilde{\mathcal{F}}$ must follow one of the models described in
\cite[p.\,56]{Brunella-impa} and
\cite[p.\,439]{Brunella-topology}: nondegenerate, semidegenerate,
or nilpotent. If we now analyze \cite[the proof of Lemma
2]{Brunella-topology}, we see that it is enough to have that
$\textnormal{kod}(\tilde{\mathcal{F}})=1$, and that most of the
leaves of $\tilde{\mathcal{F}}$ are uniformized by $\mathbb{C}$,
derived from Property~\ref{tipos} to conclude that at least
one of the $\tilde{\mathcal{F}}$-invariant fibres of $g$ is
semidegenerate, or nilpotent. But this fact and the invariancy of
$E$ and $D$ by $\tilde{\mathcal{F}}$ imply that the generic fibre
of $g$ must cut $D\cup E$ in one or two points (\em see \em
\cite[the proof of Lemma 5]{Brunella-topology}). Hence the projection $R$ is
of type $\mathbb{C}$ or $\mathbb{C}^{\ast}$.

\em $2.-$ $Y$ of type $\mathbb{C}^{\ast}$. \em The leaves of
$\mathcal{F}$ are proper, and then they are properly embedded in
$\mathbb{C}^2$ \cite{Suzuki-springercorto}. As $\mathcal{F}$ has
no rational first integrals,  at least one leaf of $\mathcal{F}$
defines a planar isolated end which is properly embedded in
$\mathbb{C}^2$ and is transcendental. It follows from
\cite{Brunella-topology2} that $\mathcal{F}$ is $P$-\,complete
with $P$ a polynomial of type $\mathbb{C}^{\ast}$ or $\mathbb{C}$.
More still, as consequence of
\cite[the proof of Th\'eor\`eme]{Brunella-topology2}, $P$ is obtained as the
projection by $\pi$ of $g_{\mid U}$, that is, $R=P$ (\em see \em
also \cite [Proposition 3]{Brunella-topology}).
\end{proof}

\begin{remark}\label{propias}
\em We observe from $2.-$ of Lemma~\ref{lema1} that if the leaves
of $\mathcal{F}$ are proper $R$ is a polynomial according to
\cite[Th\'eor\`eme]{Brunella-topology2}. \em
\end{remark}

We will study the two possibilities after the previous lemma.

\subsection{$R$ of type $\mathbb{C}$}$\qquad$

\noindent By Suzuki (\em see \em \cite{Suzuki-japonesa}), up to a
polynomial automorphism, we may assume that $R=x$. Hence
$\mathcal{F}$ is a Riccati foliation adapted to $x$.  Moreover, as
the solutions of $X$ are entire maps,  they  can only avoid at
most one vertical line by Picard's Theorem. In particular $Y$ must
be of the form
$$
Cx^N\frac{\partial}{\partial x}
+[A(x)y+B(x)]\frac{\partial}{\partial y},\,\,
$$
with $C\in\mathbb{C}$, $N\in\mathbb{N}$, and $A$,
$B\in\mathbb{C}[x]$ (\em see \em also
\cite[pp.\,652-656]{Bustinduy-indiana}).

Let us take $G=f \cdot x^{N-1+\varepsilon}$ and
$F=1/x^{N-1+\varepsilon}$, with $\varepsilon=0$ if $N\geq{1}$, or
$\varepsilon=0$ or $1$ if $N=0$. Then $X$ is decomposed as the
rational complete $F\cdot Y$ of the form I) but with
$a=A/x^{N-1+\varepsilon}$ and $b=B/x^{N-1+\varepsilon}\in
1/x^{N-1+\varepsilon}\cdot\mathbb{C}[x]$, where $d=0$ and $c=C$,
if $N\geq{1}$ or $N=\varepsilon=0$, or $c=0$ and $d=C$, if $N=0$
and $\varepsilon=1$, multiplied by $G$. We observe that
$dR(F\cdot Y)= c. R$ or $d$, and we conclude that $X$ has the form
$i)$ of Theorem~\ref{principal}.

\subsection{$R$ of type $\mathbb{C}^{\ast}$}$\qquad$

\noindent By Suzuki (\em see \em \cite{Suzuki-anales}), up to a
polynomial automorphism, we may assume that $R=x^
{m}(x^{\ell}y+p(x))^{n}$, where $m\in\mathbb{N}^\ast$, $n
\in\mathbb{Z}^{\ast}$, with $(m,n)=1$, $\ell\in\mathbb{N}$,
$p\in\mathbb{C}[x]$ of degree $< \ell$ with $p(0)\neq{0}$ if
$\ell>0$ or $p(x)\equiv{0}$ if $\ell=0$.

\subsubsection*{New coordinates} According to relations
$ x=u^n\,\,\,\,\textnormal{and}\,\,\,\,x^{\ell}y+p(x)=v \,u^{-m},$
it is enough to take the rational map $H$ from $u\neq {0}$ to
$x\neq{0}$ defined by
\begin{equation}\label{relaciones}
(u,v)\mapsto (x,y)=(u^n, {u^{-(m+n\ell)}} [v-u^m p(u^n)])
\end{equation}
in order to get $R\circ H(u,v)=v^n$.

Although $R$ is not necessarily a polynomial ($n\in\mathbb{Z}$),
it follows from the proof of \cite[Proposition 3.2]
{Bustinduy-indiana} that $H^{\ast}\mathcal{F}$ is a Riccati
foliation adapted to $v^n$ having $u=0$ as invariant line. Thus

\begin{equation} \label{hest}
\begin{split}
H^{\ast} X = & (f \circ H)\cdot H^{\ast} Y \\
= & (f \circ H(u,v))\cdot u^{k}\cdot Z \\
= & (f \circ H(u,v))\cdot u^{k} \cdot \left\{a(v)u
\frac{\partial}{\partial u} + c(v)\frac{\partial}{\partial
v}\right\},
\end{split}
\end{equation}
where $k\in\mathbb{Z}$, and $a$, $c\in\mathbb{C}[v]$

Our goal now is to prove that in (\ref{hest}) the polynomial
$c(v)$ is a monomial $cv^N$. It will be a consequence of the
following lemma.

\begin{lemma}\label{lema4}
The line $x=0$ is invariant by $Y$.
\end{lemma}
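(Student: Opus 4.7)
The plan is to use the explicit form $H^{\ast}Y = u^k[a(v)\,u\,\partial_u + c(v)\,\partial_v]$ from~\eqref{hest} and push it forward to the original $(x,y)$-coordinates. A direct computation using $x=u^n$ and $v=u^m(x^\ell y + p(x))$ gives
\[
Y_x = n\sum_{j\ge 0} a_j\, x^{1+(k+jm)/n}\bigl(x^\ell y + p(x)\bigr)^{j},
\]
where $a(v)=\sum a_j v^j$. Polynomiality of $Y_x$ together with $(m,n)=1$ forces $(k+jm)/n\in\mathbb{Z}$ and $(k+jm)/n\ge -1$ whenever $a_j\ne 0$. Consequently $\{x=0\}$ is $Y$-invariant if and only if no $j_0\ge 0$ satisfies both $a_{j_0}\ne 0$ and $k+j_0 m=-n$.

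Assume, for contradiction, that such a $j_0$ exists. Pushing forward the $\partial_v$-part of~\eqref{hest} similarly and grouping $Y_y$ by $x$-exponent, the unique contribution at exponent $-1-\ell$ is
\[
x^{-1-\ell}\bigl(x^\ell y + p(x)\bigr)^{j_0}\Bigl\{(c_{j_0+1}-a_{j_0}\alpha)\,x^\ell y + (c_{j_0+1}-a_{j_0}m)\,p(x) - a_{j_0}\,n\,x\,p'(x)\Bigr\},
\]
with $\alpha=m+n\ell$. For $\ell\ge 1$, the factor $(x^\ell y + p)^{j_0}$ carries no power of $x$ since $p(0)\ne 0$, so polynomiality of $Y_y$ requires the inner bracket to be divisible by $x^{1+\ell}$. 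Evaluating at $x=0$ forces $c_{j_0+1}=a_{j_0}m$; after this, the bracket collapses to $-n\,a_{j_0}\,x\bigl(\ell x^{\ell-1}y + p'(x)\bigr)$, whence one needs $x^{\ell}\mid \ell x^{\ell-1}y + p'(x)$. This is impossible for $\ell\ge 1$: the $y$-term is never divisible by $x^{\ell}$, and $\deg p<\ell$ forces $\deg p'<\ell-1$, so $p'(x)$ cannot supply the missing divisibility. Contradiction.

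The remaining subcase $\ell=0$ (so $p\equiv 0$ and $R=x^m y^n$) does not yield a contradiction from polynomiality alone; one must invoke the standing hypotheses more essentially. The forced form of $Y$ in this subcase is $Y = y^{j_0}\bigl[n\tilde a(z)\,\partial_x + x^{m-1}y^{n+1}\tilde c(z)\,\partial_y\bigr]$ with $z=x^m y^n$, and integrating the resulting projected one-form on the $z$-line produces a meromorphic first integral for $\mathcal F$, contradicting $\textnormal{kod}(\tilde{\mathcal F})=1$ (equivalently, the absence of a rational first integral for $\mathcal F$ established in Section~2).

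The main obstacle is the careful pushforward bookkeeping needed to isolate the inner bracket above and to track the required cancellations between the $a$-contribution and $c$-contribution of $Y_y$; the separate treatment of the $\ell=0$ edge case is also delicate and must combine the Riccati structure with the Kodaira dimension assumption to close the argument.
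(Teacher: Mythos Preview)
Your approach is genuinely different from the paper's. The paper argues geometrically: on the resolved surface $M$ the fibre $F$ of $g$ over $0$ can have at most one non-$\tilde{\mathcal F}$-invariant component (a general fact about reduced Riccati models). For $\ell=0$ this immediately forces one of the two axes to be invariant; for $\ell>0$ the paper assumes the strict transform of $\{x=0\}$ is transverse and, by contracting the other components of $F$ and tracking the divisor at infinity, reaches a contradiction with $R$ being of type $\mathbb C^{\ast}$. Your argument is instead a direct computation with the pushforward of~\eqref{hest}. The computational route is legitimate, but two points need repair.

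\textbf{Case $\ell\ge 1$.} Your displayed bracket is correct, but the assertion ``polynomiality of $Y_y$ forces the inner bracket to be divisible by $x^{1+\ell}$'' is not justified and is in general false when $m\le\ell$. Writing $Y_y=x^{-1-\ell}\bigl(w^{j_0}B+x^{m}Q\bigr)$ with $Q\in\mathbb C[x,y]$ (this uses that the nonzero $a_j,c_i$ satisfy $j\equiv j_0$ and $i\equiv j_0+1\pmod n$, so the next exponent is $-1-\ell+m$), one only gets $x^{m}\mid B$, not $x^{1+\ell}\mid B$; indeed for $m=1$, $n=2$, $\ell=1$, $p\equiv 1$, $j_0=0$ the bracket is $(c_1-3)xy+(c_1-1)$, which is never divisible by $x^{2}$. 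The argument can be rescued by isolating a single monomial: the coefficient of $x^{-1}y$ in $Y_y$ is exactly $-n\,a_{j_0}\,\ell\,p(0)^{j_0}$, because the $y^{1}$-part at $x$-exponent $-1$ of every ``other'' term involves the $x^{-m}$-coefficient of a genuine polynomial and hence vanishes. This forces $a_{j_0}=0$, the desired contradiction, for every $j_0\ge 0$.

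\textbf{Case $\ell=0$.} Your claim that the displayed form of $Y$ yields a rational first integral is incorrect. With $j_0=0$ (forced by ``isolated singularities''), one has $Y=n\tilde a(z)\,\partial_x+x^{m-1}y^{n+1}\tilde d(z)\,\partial_y$; along trajectories $\displaystyle \frac{dz}{dy}=\frac{n\bigl(m\tilde a(z)+z\tilde d(z)\bigr)}{y\,\tilde d(z)}$, and a rational first integral would require all residues of $\tilde d/(m\tilde a+z\tilde d)$ to be rational, which fails e.g.\ for $\tilde a\equiv 1$, $\tilde d(z)=1+z$, $m\ge 1$. What your computation \emph{does} prove is that $Y_y$ carries the factor $y^{\,n+1}$, so $\{y=0\}$ is $Y$-invariant. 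Since for $\ell=0$ one has $R=x^{m}y^{n}$ with $m,n\in\mathbb N^{\ast}$, the polynomial automorphism $(x,y)\mapsto(y,x)$ (together with $m\leftrightarrow n$) reduces to the situation where $\{x=0\}$ is invariant. This is exactly the content of the paper's $\ell=0$ step (``at least one irreducible component of $\{xy=0\}$ must be invariant''), reached here by computation rather than by the Riccati local models.
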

\begin{proof}
Take the Riccati foliation $\tilde{\mathcal{F}}$ on $M$, and let
$F$ be the fibre over $0$. It follows from the local study of
\cite{Brunella-topology} or \cite{Brunella-impa} that \em at most
\em one irreducible component of $F$ can be non-invariant by
$\tilde{\mathcal{F}}$ (just look at the blow-up of models of
\cite{Brunella-topology}). Moreover, if such a non-invariant
component exists, then it is everywhere transverse to the
foliation. This settles immediately the case $\ell=0$ in $R$ since
at least one irreducible component of $\{xy=0\}$ must be
invariant.

In the case $\ell>0$, $\{R=0\}$ has two disjoint components, one
(the axis  $\{x=0\}$) isomorphic to $\mathbb{C}$ and another
isomorphic to $\mathbb{C}^{\ast}$. We want to prove that the first
is necessarily invariant. Let us assume the contrary. Let $C$ be
the irreducible component of $F$ corresponding to $\{x=0\}$ and
assume that it is transverse to $\tilde{\mathcal{F}}$. There is
one and only one point $p\in C$ which belongs to the divisor at
infinity $D$. This point is also the unique intersection point
between $C$ an the other components of $F$. Because $D$ and $F
\setminus C$ are invariant, and the foliation is regular at $p$,
we see that there exists a common irreducible component $E\subset
D\cap F$ such that, on a neighborhood $U$ of $C$, we have
$$
D\cap U = E \cap U\quad \textnormal{and} \quad F\cap U=(E\cap
U)\cup C.
$$

Now, by contracting components of $F$ different from $C$, we get a
model $C_{0}$ like (a) of \cite{Brunella-topology} (not like (b),
which contains two quotient singularities). The direct image $
D_0$ of $D$ is then an invariant divisor which cuts $C_0$ at a
single point $p_0$. Hence it cuts a generic fibre also at a single
point, which contradicts that $R$ is of type $\mathbb{C}^{\ast}$.
\end{proof}

By Lemma~\ref{lema4}, as $H$ is a finite covering map from $u\neq
0$ to $x\neq 0$, $H^{\ast}X$ is complete on $u\neq 0$. Thus
according to Picard's Theorem its solutions are entire maps which
can avoid at most one horizontal line, and hence $c(v)$ in
(\ref{hest}) is of the form $cv^N$ with $c\in\mathbb{C}$,
$N\in\mathbb{N}$.

We can write $H^{\ast} X$ as the product of the complete field
$1/v^{N-1+\varepsilon}\cdot Z$ in $u\neq{0}$ by the function $f
\circ H(u,v)\cdot u^{k}\cdot v^{N-1+\varepsilon}$, where
$\varepsilon=0$ if $N\geq{1}$, or $\varepsilon=0$ or $1$ if $N=0$.

\begin{proposition}
$Y$ has proper trajectories
\end{proposition}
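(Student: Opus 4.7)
The plan is to apply the ``complete vector field times second integral'' principle on the Stein surface $\{u\neq 0\}\simeq\mathbb{C}^{*}\times\mathbb{C}$, transplanting the arguments of Properties~\ref{segunda} and \ref{tercera} from $\mathbb{C}^{2}$ to this surface. The decomposition obtained just before the statement writes $H^{*}X = G\cdot W$, with $W = \tfrac{1}{v^{N-1+\varepsilon}}Z$ complete on $\{u\neq 0\}$ by construction and $G = (f\circ H)\cdot u^{k}\cdot v^{N-1+\varepsilon}$ holomorphic there. Since $H$ restricted to $\{u\neq 0\}$ is a finite unramified covering onto $\{x\neq 0\}$ and $X$ is complete on $\mathbb{C}^{2}$, the pull-back $H^{*}X$ is also complete on $\{u\neq 0\}$.

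The proof of Property~\ref{segunda} uses only that the ambient is a Stein surface on which both $W$ and $G\cdot W$ are complete, together with the fact that trajectories of a complete holomorphic vector field on a Stein surface are uniformised by $\mathbb{C}$; it therefore applies verbatim here and shows that $G$ is a second integral of $W$, so $WG$ is a holomorphic first integral of $W$ on $\{u\neq 0\}$. I would then imitate the case distinction of Property~\ref{tercera}. If $WG$ is non-constant it is already a non-trivial holomorphic first integral of $W$, so the leaves are irreducible components of its level sets, hence proper analytic curves in $\{u\neq 0\}$. Otherwise $WG\equiv c_{0}$ is a constant: if $c_{0}=0$ then $G$ itself is the first integral and is non-constant because $f$ is transcendental while $u^{k}v^{N-1+\varepsilon}$ is rational; if $c_{0}\neq 0$ then $G$ is affine of non-zero slope along every trajectory of $W$, which forces each leaf to be of type $\mathbb{C}$ (an affine function on $\mathbb{C}^{*}$ is constant) and the analytic set $\{G=0\}$ to meet every leaf in exactly one point, so the map $w\mapsto\varphi^{W}_{-G(w)/c_{0}}(w)$ is a holomorphic retraction of $\{u\neq 0\}$ onto $\{G=0\}$ whose fibres are the leaves of $W$, again making them proper.

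Once the leaves of $W$, and so those of $H^{*}Y$, are known to be proper in $\{u\neq 0\}$, their images under the finite covering $H$ give proper analytic curves in $\{x\neq 0\}\subset\mathbb{C}^{2}$. Lemma~\ref{lema4} says that $\{x=0\}$ is $Y$-invariant, and a Remmert--Stein extension of these closed analytic curves across the one-dimensional invariant divisor $\{x=0\}$ produces analytic closures in $\mathbb{C}^{2}$, completing the proof that every trajectory of $Y$ is proper. The most delicate point is the sub-case $WG\equiv c_{0}\neq 0$, in which no first integral is produced directly and one must rely on the global transversal $\{G=0\}$; throughout the argument, the transcendence of $f$ is what guarantees that $G$ is non-constant.
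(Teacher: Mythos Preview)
Your overall strategy---apply Property~\ref{segunda} to the decomposition $H^{\ast}X=G\cdot W$ on the Stein surface $\{u\neq 0\}$ and then run the case analysis of Property~\ref{tercera}---is sound, but the final descent step is not. You invoke Remmert--Stein to extend a closed analytic curve in $\{x\neq 0\}$ across the invariant line $\{x=0\}$; however Remmert--Stein requires the exceptional set to have dimension strictly smaller than the curve, and here both are one-dimensional. A leaf that is closed in $\{x\neq 0\}$ may well accumulate on all of $\{x=0\}$ without its closure being analytic (think of the leaf $\{y=e^{1/x}\}$ of the foliation $x^{2}\,dy+y\,dx=0$, which has $\{x=0\}$ invariant). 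So ``proper in $\{u\neq 0\}$'' does not transfer to ``proper in $\mathbb{C}^{2}$'' by the mechanism you describe.

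The paper avoids this obstacle by descending a \emph{first integral} rather than properness. It first splits on the type of $X$: if $X$ is of type $\mathbb{C}^{\ast}$ one quotes \cite{Suzuki-springercorto} directly and is done. If $X$ is of type $\mathbb{C}$, then so is $H^{\ast}X$, and your sub-case $WG\equiv c_{0}\neq 0$ is impossible: a non-constant affine $G$ on a type-$\mathbb{C}$ leaf of $W$ forces the corresponding trajectory of $H^{\ast}X=G\cdot W$ to be the complement of a point, i.e.\ of type $\mathbb{C}^{\ast}$, a contradiction. Hence $G$ itself is a first integral of $W$ on $\{u\neq 0\}$, and the paper then pushes it down explicitly via the relations~(\ref{relaciones}): $G^{mn}=(f\circ H)^{mn}u^{mnk}v^{mn(N-1+\varepsilon)}$ descends to the meromorphic function $f^{mn}\,x^{mk}\,(x^{m}(x^{\ell}y+p(x))^{n})^{m(N-1+\varepsilon)}$ on $\mathbb{C}^{2}$, which is a genuine meromorphic first integral of $Y$. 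From a meromorphic first integral on all of $\mathbb{C}^{2}$ the properness of every trajectory follows at once. Your argument is easily repaired along these lines: replace the Remmert--Stein step by this explicit descent of $G^{mn}$, and observe that your delicate sub-case $c_{0}\neq 0$ never occurs. Note also that the paper's proof treats the case $R$ of type $\mathbb{C}$ (\S3.1) in parallel, which your write-up does not mention.
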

\begin{proof}
If $X$ is of type $\mathbb{C}^{\ast}$ it follows by
\cite{Suzuki-springercorto}. If $X$ is of type $\mathbb{C}$, with
the notations of \S $3.1$ an \S $3.2$, we distinguish two cases:

First case: \em R of type \em $\mathbb{C}$. Assume that $F\cdot Y$
is of type $\mathbb{C}$ by \cite{Suzuki-springercorto}. As $F\cdot
Y$ is complete the restriction of $G$  to each solution
$\varphi_z$ of that field is constant (Property~\ref{segunda}),
and hence $G$ is a meromorphic first integral of $Y$.

Second case: \em R of type \em $\mathbb{C}^{\ast}$. Assume that
$1/v^{N-1+\varepsilon}\cdot Z$ is of type $\mathbb{C}$ by
\cite{Suzuki-springercorto}. One sees that $(f \circ H(u,v)\cdot
u^{k}\cdot v^{N-1+\varepsilon})\circ \varphi_z$ must be constant
for each entire solution $\varphi_z$ of that field through points
$z$ in $u\neq{0}$ (Property~\ref{segunda}). Then, according to
(\ref{relaciones}), ${(f \circ H(u,v)\cdot u^{k}\cdot
v^{N-1+\varepsilon})}^{mn}$ is projected by $H$ as
$$
f^{mn}\cdot x^{mk}\cdot {(x^{m}{(x^{\ell} y +
p(x))}^{n})}^{m(N-1+\varepsilon)}
$$
thus obtaining a meromorphic first integral of $Y$.

\end{proof}

\noindent \em The global one form of times. \em Let us take the
one-form $\eta$ obtained when we remove the codimension one zeros
and poles of $dR(x,y)$.  The contraction of $\eta$ by $Y$,
$\eta(Y)$, is a polynomial, which vanishes only on components of
fibres of $R$, since $Y$ has only isolated singularities. In fact,
the number of these fibres over nonzero values is at most one.
Otherwise the entire solutions of $X$ would be projected by $R$,
avoiding at least two points, which is impossible by Picard's
Theorem. Then, \em up to multiplication by constants: \em
\begin{equation}\label{polinomio}
\eta(Y)=
\begin{cases}
x^{\alpha}\cdot{(x^{\ell} y + p(x))}^{\beta} \cdot
{(x^{m}{(x^{\ell} y + p(x))}^{n} - s )}^{\gamma},\,\,\text{if}\,\,n>0;\\
x^{\alpha}\cdot{(x^{\ell} y + p(x))}^{\beta}\cdot {(
x^{m}-s{(x^{\ell} y + p(x))}^{-n})}^{\gamma},\,\,\text{if}
\,\,n<0.
\end{cases}
\end{equation}
where $\alpha$, $\beta$, $\gamma\in \mathbb{N}$, and
$s\in\mathbb{C}^{\ast}$.

Let us define $\tau=[1/(f\cdot\eta(Y))]\cdot \eta$. This one-form
on $\{f\cdot\eta(Y)\neq{0}\}$ coincides locally along each
trajectory of $X$  with the \em differential of times \em given by
its complex flow. It is called the global one-form of times for
$X$. Moreover $\tau$ can be easily calculated attending to
(\ref{polinomio}) as
\begin{equation}\label{tau}
\tau= \dfrac{x(x^{\ell} y + p(x))}{f \cdot \eta(Y)} \cdot \dfrac{d
R}{R}.
\end{equation}
In $(u,v)$ coordinates  we then get

\begin{equation}\label{rho1}
\varrho=H^{\ast}\tau=
\begin{cases}
\dfrac{u^{m(\beta-1)-n(\alpha-1)}}{(f\circ H) \cdot
v^{\beta-1}\cdot {(v^n-s)}^{\gamma}}\,\cdot \dfrac{d v^n}{v^n},
\,\,\text{if}
\,\,n>0;\\
\\
\dfrac{u^{m(\beta-1)-n(\alpha-1) -mn\cdot\gamma}}{(f\circ H) \cdot
v^{\beta-1}\cdot {(1-s v^{-n})}^{\gamma}}\,\cdot\dfrac{d
v^n}{v^n},\,\,\text{if} \,\,n<0.
\end{cases}
\end{equation}

It holds that $\varrho(H^{\ast} X)\equiv{1}$. Since $\varrho -
1/[(f \circ H(u,v) \cdot u^k \cdot c v^N)]\,dv$ contracted by
$H^{\ast}X$ is identically zero  and we are assuming that there is
no rational first integral,
\begin{equation}\label{rho2}
\varrho=1/[(f \circ H(u,v) \cdot u^k \cdot c v^N)]\,dv.
\end{equation}
Therefore (\ref{rho1}) and (\ref{rho2}) must be equal and $k$ of
(\ref{hest}) can be explicitly calculated. Finally, let us observe
that  for any path $\epsilon$ contained in a trajectory of $X$
from $p$ to $q$, that can be lifted by $H$ as $\tilde{\epsilon}$,
$\int_{\tilde{\epsilon}}\,\varrho$, represents the complex time
required by the flow of $X$ to travel from $p$ to $q$.

We may assume that $\gamma=0$, $\beta=N$ and $\alpha>0$ in
(\ref{rho1}). Moreover according to Remark~\ref{propias} we can
also assume that \em $R$ is a polynomial and that $n>0$. \em

Let us observe that $Y$ can be explicitly calculated as
\begin{multline}\label{Y}
Y=u^{k}\cdot H_{\ast} (a(v)u\frac{\partial}{\partial u} +cv^N
\frac{\partial}{\partial v})=\\
\\
=u^{k}\cdot \left(
\begin{array}{cc}
nu^{n-1} & 0 \\
      \\
\dfrac{n\ell u^mp(u^n)-u^{n+m}p'(u^n)-(m + n\ell)v}{u^{m+n\ell+1}}
& \dfrac{1}{u^{m+n\ell}}
\end{array}
\right)\cdot \left(
\begin{array}{c}
a(v)u\\
                                     \\
cv^N
\end{array}
\right)
\end{multline}
where $u={x}^{1/n}$ and $v={x}^{m/n}\,(x^{\ell}y + p(x))$.

We analyze two cases:

\noindent $\bullet$ $N\geq {1}$. We show that \em each term
$1/v^{N-1}\cdot Z$ and $f \circ H(u,v)\cdot u^{k}\cdot v^{N-1}$ of
the decomposition of $H^{\ast}X$  can be separately projected by
$H$. \em Let us observe that $a(0)\neq{0}$. Otherwise $Y$ had not isolated
singularities since $N>0$. The first component
$$n {x}^{(k+n)/n}a({x}^{m/n}\,(x^{\ell}y + p(x)))$$
of (\ref{Y}) must be a polynomial. Since $k=n(\alpha-1)-m(N-1)$ by
(\ref{rho1}) and (\ref{rho2}), $k=n\cdot\delta$ with
$\delta\in\mathbb{Z}$. On the other hand $(m,n)=1$, and it implies
that $N-1=n \cdot \kappa$ with $\kappa\in\mathbb{Z}$. Using
(\ref{relaciones}), one gets

\begin{equation}\label{FG}
\begin{split}
H_{\ast}(f \circ H(u,v)\cdot u^k \cdot v^{N-1})= &G= f\cdot
x^{\delta}\cdot{(x^{m}{(x^{\ell} y +
p(x))}^{n})}^{\kappa}\\
H_{\ast}(1/v^{N-1}\cdot Z)= & F\cdot Y={1/(x^{m}{(x^{\ell} y +
p(x))}^{n})}^{\kappa} \cdot Y.
\end{split}
\end{equation}
Finally, as $dv^n(1/v^{N-1}\cdot Z)=nc \cdot v^n $, $dR (F\cdot
Y)=nc \cdot R$. If one now defines $G$ and $F$ according to
(\ref{FG}), and $\Omega=nc$, $X$ is as in $ii)$ and $iii)$ of $A)$
in Theorem~\ref{principal}.

\noindent $\bullet$ $N=0$. As $Y$ is a polynomial vector field
with isolated singularities, a simple inspection of the two
components in (\ref{Y}) implies that $k=m+n\ell$ and
$a\in(1/z)\cdot \mathbb{C}[z^n]$, with $a(0)=0$ if $n>1$. Finally,
according to (\ref{Y}), one sees that $1/x^{(m + n \ell)/n} \cdot
Y$ is obtained as the projection of a complete vector field
whose trajectories are of type $\mathbb{C}$. Therefore $X$ is as
in B) of Theorem~\ref{principal}.

This finishes the part of
$\textnormal{kod}(\tilde{\mathcal{F}})=1$.

\section{
$\textnormal{{\bf kod}}(\tilde{\mathcal{F}})=0$}

\noindent \em  In what follows we may suppose that $Y$ is of type
$\mathbb{C}$. \em If $Y$ is of type $\mathbb{C}^{\ast}$, as $f=0$
is empty or invariant by $Y$, $X$ is also of type
$\mathbb{C}^{\ast}$ (Property~\ref{tipos}). According to
\cite{Suzuki-springercorto} the leaves of type $\mathbb{C}^{\ast}$
of $\mathcal{F}$ are proper and they are properly embedded in
$\mathbb{C}^2$. Moreover, if these leaves of $\mathcal{F}$ are
algebraic there exists a rational integral by Darboux's Theorem.
Therefore at least one leaf of type $\mathbb{C}^{\ast}$ of
$\mathcal{F}$ defines a planar isolated end which is properly
embedded in $\mathbb{C}^2$ and is transcendental, and then
$\mathcal{F}$ is $P$-\,complete with $P$ a polynomial of type
$\mathbb{C}^{\ast}$ or $\mathbb{C}$ \cite [Proposition
3]{Brunella-topology} (\em see \em also 2.- of Lemma~\ref{lema1}).
This is enough to apply the results of \S 3.1 and \S 3.2.

According to \cite[Section IV]{Mc} we can contract
$\tilde{\mathcal{F}}$-invariant rational curves on $M$ (via the
contraction $s$) to obtain a new surface $\bar{M}$ (maybe
singular), a reduced foliation $\bar{\mathcal{F}}$ on this
surface, and a finite covering map $r$ from a smooth $S$ to
$\bar{M}$ such that: 1) $r$ ramifies only over (quotient)
singularities of $\bar{M}$ and 2) the foliation
$r^{\ast}(\bar{\mathcal{F}})$ is generated by a holomorphic vector
field $Z_{0}$ on $S$ with isolated zeroes. It follows from \cite
[p.\,443]{Brunella-topology} that the covering $r$ can be lifted
to $M$ via a birational morphism $g:T \to S$ and a ramified
covering $h:T \to M$ such that $s \circ h = r \circ g$. So we have
the following diagram:
$$
\xymatrix{  M\ar[d]_{s}
& \ar[l]_{h} T \ar[dl]_{s \circ h}^{r \circ g} \ar[d]^{g} \\
  \bar{M} & \ar[l]^{r} S}
$$
This construction guarantees the existence of two open sets $V, W
\subset T$ with the property that the covering $\pi \circ h: V \to
\mathbb{C}^2\setminus \pi(E)$ is either unramified ($V=W$) or it
ramifies only over a line $L$ in $\mathbb{C}^2\setminus \pi(E)$
($V\neq W$). It allows to lift $Z_0$ via $g$ as a rational vector
field $Z$ on $T$ generating
$g^{\ast}(r^{\ast}(\bar{\mathcal{F}}))=h^{\ast}(\tilde{\mathcal{F}})$,
verifying that it is holomorphic and complete on $W$, with
a pole along $V \setminus W$ \cite [Lemma 7]{Brunella-topology}.
One analyzes the two possibilities above:

$1.-$ If $V = W$, using the regular cover $\pi \circ h: V \to
\mathbb{C}^2\setminus \pi(E)$ that is trivial, one can extend $Z$
to a finite set of points obtaining a complete polynomial
vector field that generates $\mathcal{F}$. Therefore $Y=P\cdot Z$
with $P$ a polynomial, which must be constant since $Y$ has
isolated singularities. So we can assume that $Y=Z$ and then $f$
is a holomorphic second integral of $Y$. According to
Property~\ref{tercera}, $Y$ has a holomorphic first integral, and
hence its trajectories are proper.

On the other hand, we know that the flow of $Y$ is algebraic,
since this vector field arises from $Z_0$ on $S$ which generates
algebraic automorphisms of $S$. As a consequence, after a
polynomial automorphism, $Y$ has to be one of these two vector
fields (\em see \em \cite {Brunella-topology}):
$$a) \,\,\,  \lambda x \frac{\partial}{\partial x} + \mu y
\frac{\partial}{\partial y}, \quad \lambda,\mu\in\mathbb{C},\,
\lambda/\mu\notin \mathbb{Q},
$$
$$ \quad \quad \,b)\,\,
\,\lambda x \frac{\partial}{\partial x} + (\lambda m y + x^m)
\frac{\partial}{\partial y},\quad \lambda\in\mathbb{C},\,
m\in\mathbb{N}.
$$
\noindent Cases $a)$ and $b)$ with $m>0$ never have proper
trajectories. Therefore $Y$ is as $b)$ with $m=0$, and $X$ has the
form $i)$ of Theorem~\ref{principal}.

\begin{remark}\label{obs2}
\em Once $Y$ have been determined, $f$ can be easily obtained. As
$Yy=1$, $f=H \cdot y + G$, with $H$ and $G$ first integrals of
$Y$. It is enough to define $H=Yf$ and $G=f-y\cdot Yf$. On the
other hand, computing the flow of $Y$, we can see directly that
its trajectories are contained in the level sets of $xe^{-\lambda
y}$. Finally, according to Stein Factorization Theorem,
$H=h(xe^{-\lambda y})$ and $G=g(xe^{-\lambda y})$ where $h$ and $g$ are
entire functions in one variable. \em
\end{remark}

$2.-$ If $V \neq  W$, then $Y$ is of type $\mathbb{C}^{\ast}$
\cite[p.\,445]{Brunella-topology} which contradicts our
assumptions.

\section{
Polynomial version of Theorem~\ref{principal1}}

\begin{theorem}\label{principal}
Let $X$ be a complete vector field on $\mathbb{C}^2$ of the form
$f \cdot Y$, where $Y$ is a polynomial vector field with isolated
singularities and $f$ is a transcendental function. Then, all the
trajectories of $X$ are proper and, up to a polynomial
automorphism, $X$ can be decomposed as $G \cdot F\cdot Y$ in one
of the two following cases:

\noindent {\bf \em A)\em} $G$ is a meromorphic function that is
affine along the trajectories of a rational complete vector field
$F \cdot Y$ such that $dR (F\cdot Y)=\Omega \cdot R^{j}$, where
$\Omega\in\mathbb{C}$, $j=0$ or $1$, $R$ is a polynomial of type
$\mathbb{C}$ or $\mathbb{C}^{\ast}$, and $F$ is constant or equal to $x^{-\delta}$, with $\delta\in \mathbb{Z}$ along the
fibres of $R$. Explicitly, $X$ is defined by the following forms:

\noindent {\bf i)} The case $R=x$, where

\vspace{-0.15cm}
\begin{enumerate}

\item[{\em -\em}] $F\cdot Y$ is as in \em I); \em

\item [{\em -\em}] $a,\,b\in
1/x^{N-1+\varepsilon}\cdot\mathbb{C}[x]$, where $d=0$ if
either with $N\geq{1}$ or $N=\varepsilon=0$, and $c=0$ if $N=0$,
$\varepsilon=1$;

\item[{\em -\em}] $F=1/x^{N-1+\varepsilon}$, $G=f \cdot
x^{N-1+\varepsilon}$ where $N\in\mathbb{N}$, and where $\varepsilon=0$ if $N\geq{1}$, or
$\varepsilon=0,1$ if $N=0$.

\end{enumerate}

\noindent {\bf ii)}The case $R=x^my^n$, where

\vspace{-0.15cm}
\begin{enumerate}

\item [{\em -\em}] $F\cdot Y$ is as in \em II); \em

\item [{\em -\em}] $\lambda\in 1/z^{\kappa}\cdot\mathbb{C}[z]$,
$z=x^my^n$;

\item[{\em -\em}] $F=1/({(x^{m}y^{n})}^{\kappa}\cdot x^{\delta})$, $G=f \cdot
x^{\delta}\cdot {(x^{m}y^{n})}^{\kappa}$; with $\kappa,\delta\in\mathbb{Z}$, $m,n\in\mathbb{N}^{\ast}$ and
$(m,n)$ $=1$.

\end{enumerate}

\noindent {\bf iii)} The case $R=x^m(x^{\ell}y+p(x))^n$, where

\vspace{-0.15cm}
\begin{enumerate}

\item [{\em -\em}] $F\cdot Y$ is as in \em III); \em

\item [{\em -\em}] $\lambda\in 1/z^{\kappa}\cdot\mathbb{C}[z]$,
$z=x^m(x^{\ell}y+p(x))^n$;

\item[{\em -\em}] $F=1/({(x^m(x^{\ell}y+p(x))^n)}^{\kappa}\cdot x^{\delta})$, $G=f
\cdot x^{\delta}\cdot {(x^m(x^{\ell}y+p(x))^n)}^{\kappa}$; with $\kappa,\delta\in\mathbb{Z}$, $m,n,\ell\in\mathbb{N}^{\ast}$,
$(m,n)=1$, $p\in\mathbb{C}[x]$ of degree $< \ell$, and  $p(0)\neq{0}$.

\end{enumerate}

\vspace{0.25cm}

\noindent {\bf \em B)\em} $G=f\cdot x^{(m + n \ell)/n}$ is a
multivaluated holomorphic function that is affine along the
trajectories of $F\cdot Y$, which is a multivaluated complete
vector field with all its trayectories of type $\mathbb{C}$
defined by the product of $F=1/x^{(m + n \ell )/n}$ by the
polynomial vector field
$$
Y=u^{m+n(\ell+1)} a(v)\frac{\partial}{\partial x} +  [n\ell
u^mp(u^n)-u^{n+m}p'(u^n)-(m + n\ell)v]a(v) + c
\frac{\partial}{\partial y}
$$
where $u={x}^{1/n}$, $v={x}^{m/n}\,(x^{\ell}y + p(x))$, with
$m,n\in\mathbb{N}^{\ast}$, $(m,n)=1$, $\ell\in\mathbb{N}$,
$p\in\mathbb{C}[x]$ of degree $< \ell$, $p(0)\neq{0}$ if $\ell>0$
or $p(x)\equiv{0}$ if $\ell=0$, $c\in\mathbb{C}^{\ast}$, and $a\in
(1/z)\cdot \mathbb{C}[z^n]$, with $a(0)=0$ if $n>1$.

\end{theorem}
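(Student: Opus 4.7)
The plan is to split the argument along two dichotomies: first whether $\mathcal{F}$ admits a rational first integral, and then, within the negative branch, by the Kodaira dimension of $\tilde{\mathcal{F}}$. If $\mathcal{F}$ has a rational first integral, Proposition~\ref{firsts} places $X$, up to a polynomial automorphism, in one of the forms $1)$, $4)$ or $5)$ of Suzuki's list with a polynomial first integral. In each of these three cases I would exhibit a rational complete vector field $F\cdot Y$ of Brunella's shape I), II) or III) preserving that integral, with $F$ rationally adjusted so that $G=X/(F\cdot Y)$ is meromorphic. Property~\ref{segunda} then gives the affine dependence of $G$ along trajectories of $F\cdot Y$, and the three Suzuki models force the decompositions i), ii), iii) of case A). Properness of the trajectories of $X$ is inherited from the proper flow of the underlying Suzuki model.

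For $\mathrm{kod}(\tilde{\mathcal{F}}) = 1$ I would invoke Lemma~\ref{lema1} to realise $\tilde{\mathcal{F}}$ as a Riccati foliation whose adapted fibration projects to a rational $R$ of type $\mathbb{C}$ or $\mathbb{C}^{\ast}$. When $R$ is of type $\mathbb{C}$, a polynomial automorphism sends $R$ to $x$; Picard's theorem forces $Y$ into a triangular form with polynomial coefficients, and the choice $F=1/x^{N-1+\varepsilon}$ realises decomposition i) of A). When $R$ is of type $\mathbb{C}^{\ast}$, I pass via \eqref{relaciones} to $(u,v)$-coordinates. The line-invariance Lemma~\ref{lema4} keeps $H^{\ast}X$ complete on $\{u\neq 0\}$, so Picard's theorem forces the $v$-coefficient in \eqref{hest} to be $cv^{N}$; the resulting factorisation together with Property~\ref{segunda} supplies a second integral, and Proposition~2 gives properness. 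Equating the two expressions \eqref{rho1} and \eqref{rho2} of the global one-form of times determines $k$ explicitly, and $(m,n)=1$ then forces the divisibilities $k=n\delta$ and $N-1=n\kappa$ when $N\ge 1$, producing ii) and iii) of A); the borderline $N=0$ yields the multivalued case B).

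For $\mathrm{kod}(\tilde{\mathcal{F}})=0$ I would first reduce the case $Y$ of type $\mathbb{C}^{\ast}$ to the $\mathrm{kod}=1$ analysis: by Property~\ref{tipos} the same type passes to $X$, and \cite{Brunella-topology2} supplies a polynomial of type $\mathbb{C}$ or $\mathbb{C}^{\ast}$ playing the role of $R$. When $Y$ is of type $\mathbb{C}$, I invoke McQuillan's contraction of $\tilde{\mathcal{F}}$-invariant rational curves together with the covering diagram of \S 4, whose ramification alternative is either $V=W$ or $V\neq W$. In the subcase $V=W$ the regular cover $\pi\circ h$ lets one extend the lifted $Z$ to a complete polynomial generator of $\mathcal{F}$, and, since $Y$ has isolated singularities, $Y=Z$ up to a constant; $f$ becomes a true second integral and Property~\ref{tercera} supplies a first integral of $Y$. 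Classifying the resulting algebraic flow, the only possibility compatible with proper trajectories is Suzuki's $b)$ with $m=0$, which again falls under form i) of A), with $f$ explicitly described as in Remark~\ref{obs2}. The subcase $V\neq W$ is excluded because it would force $Y$ of type $\mathbb{C}^{\ast}$, contradicting the standing hypothesis of this step.

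The principal obstacle is the $R$ of type $\mathbb{C}^{\ast}$ branch of the $\mathrm{kod}=1$ case. The geometric keystone is Lemma~\ref{lema4}: without the invariance of the axis $\{x=0\}$ one cannot transfer completeness of $X$ along the covering $H$, and without completeness on $\{u\neq 0\}$ the Picard-type argument forcing $c(v)=cv^{N}$ collapses. Beyond Lemma~\ref{lema4}, the remaining technical labour is the algebraic bookkeeping that matches \eqref{rho1} with \eqref{rho2} and extracts, from $(m,n)=1$, the divisibility of $k$ and $N-1$ by $n$; these divisibilities are precisely what is needed to read off the exponents $\delta$ and $\kappa$ recorded in ii), iii) of A) and in B), and to verify that these three shapes genuinely account for every complete $X$ of the stated form.
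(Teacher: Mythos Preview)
Your proposal is correct and follows essentially the same approach as the paper: the same dichotomy (rational first integral via Proposition~\ref{firsts}, then $\mathrm{kod}=1$ via Lemma~\ref{lema1} split into $R$ of type $\mathbb{C}$ and $\mathbb{C}^{\ast}$, then $\mathrm{kod}=0$ via McQuillan's contraction and the $V=W$/$V\neq W$ alternative), with Lemma~\ref{lema4} as the geometric keystone in the $\mathbb{C}^{\ast}$ branch and the comparison of \eqref{rho1} with \eqref{rho2} extracting the exponents $\delta,\kappa$ and the split between $N\ge 1$ (cases ii), iii) of A)) and $N=0$ (case B)). The only cosmetic difference is that the paper handles the rational-first-integral case by noting directly that Suzuki's forms $1)$, $4)$, $5)$ are specialisations of i), ii), iii), rather than re-deriving a Brunella-type $F\cdot Y$ as you outline.
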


\begin{remark}
\em Vector fields in A) are obtained by multiplication of a
rational complete one $F \cdot Y$ in (rational) Brunella's
classification and a \em meromorphic \em second integral $G$, that
is, a function which is affine along the trajectories of $F \cdot
Y$. It is important to remark that in Brunella's list there are
non-proper vector fields, which do not appear in our classification
due to the existence of $f$.

On the other hand, any $X$ of $ii)$ and $iii)$ can be expressed
after the rational change of coordinates $H$ given by
(\ref{relaciones}) as
$$
H^{\ast} X= f\circ H(u,v) \cdot  u^{n \delta} \cdot \left\{
a(v)u\frac{\partial}{\partial u} +cv^N \frac{\partial}{\partial
v}\right\},
$$

Let us also note that $1)$, $4)$ and $5)$ of Suzuki's list define
respectively cases $i)$, $ii)$ and $iii)$ with polynomial first
integral.  However, A) contains other different vector fields
since $i)$, $ii)$ and $iii)$ can not be reduced in general to
$1)$, $4)$ and $5)$ by a polynomial automorphism.

Vector fields $X$ in B) can be expressed after  $H$ as
$$
H^{\ast} X= f\circ H(u,v) \cdot u^{m+n\ell}\cdot
\left\{a(v)u\frac{\partial}{\partial u} +c
\frac{\partial}{\partial v}\right\}.
$$
The following Example\,2 gives us one $X$ in B) with an explicit $f$ which is
not in A).

\em
\end{remark}

\noindent \em {\bf Example 2.} \em Let us consider

$$
X= f \cdot Y= e^{-(m/nc)\cdot x^m y^n} \left \{x^{1+m}y^{n-1}
\frac{\partial}{\partial x} - (m x^{m} y^{n} - c)
\frac{\partial}{\partial y} \right \}.
$$

We see that $X$ is as in B) of Theorem~\ref{principal}, $Y$ is as
in B) with $\ell=0$ and $a(z)=(1/z)\cdot z^{n}=z^{n-1}$, and $X$ is
complete, since according to (\ref{Y}), $H^{\ast} X$ equals to
$$
e^{-(m/nc)\cdot v^n}\cdot H^{\ast} Y={(u \cdot e^{-(1/nc)\cdot
v^n})}^{m}\cdot \left\{v^{n-1}u \frac{\partial}{\partial u} +
c\frac{\partial}{\partial v}\right\}
$$
is a complete polynomial vector field multiplied by a first
integral.

\bibliographystyle{plain}
\def\cprime{$'$} \def\cprime{$'$}

\end{document}